\documentclass[a4paper,11pt,reqno]{amsart}

\usepackage{amsmath}
\usepackage{amsthm}
\usepackage{amssymb}
\usepackage{graphicx}
\usepackage{epsfig}
\usepackage[pdftex]{color}
 \definecolor{MyRed}{rgb}{0.9,0,0}
 \definecolor{MyGreen}{rgb}{0,0.9,0}
 \definecolor{MyBlue}{rgb}{0,0,0.9}
\usepackage[pdftex]{hyperref}
\hypersetup{breaklinks=true,colorlinks=true,citecolor=MyGreen,urlcolor=MyRed,linkcolor=MyBlue,pdfpagemode=UseNone}
\addtolength{\topmargin}{-0.4cm} \addtolength{\textheight}{1.1cm}
\addtolength{\oddsidemargin}{-1.4cm}
\addtolength{\evensidemargin}{-1.4cm}
\addtolength{\textwidth}{2.4cm}

\theoremstyle{plain}

\newtheorem{theorem}{Theorem}

\newtheorem*{thm*}{Theorem}
\newtheorem*{theorem*}{Theorem}

\newtheorem*{claim*}{Claim}
\newtheorem{lemma}[theorem]{Lemma}

\newtheorem{conjecture}[theorem]{Conjecture}

\theoremstyle{definition}


\renewcommand{\leq}{\leqslant}
\renewcommand{\geq}{\geqslant}


\def\COMMENT#1{}
\let\COMMENT=\footnote
\def\GL{\emph{GL}}

\begin{document}

\title{Influences of monotone Boolean functions}
\author{Demetres Christofides}
\email{christod@maths.bham.ac.uk}
\thanks{Supported by the EPSRC, grant no. EP/E02162X/1.}
\address{School of Mathematics, University of
Birmingham, Edgbaston, Birmingham, B15 2TT, UK}
\date{\today}
\subjclass[2000]{05D05; 06E30}
\keywords{Discrete cube; Boolean
functions; Influence}
\begin{abstract}
Recently, Keller and Pilpel conjectured that the influence of a
monotone Boolean function does not decrease if we apply to it an
invertible linear transformation. Our aim in this short note is to
prove this conjecture.
\end{abstract}
\maketitle

\section{Introduction}

Given a positive integer $n$, a \emph{Boolean function} on $n$
variables is a function $f: \{0,1\}^n \to \{0,1\}$. The function is
called \emph{monotone} if for all $x = (x_1,\ldots,x_n), y =
(y_1,\ldots,y_n) \in \{0,1\}^n$ satisfying $x_i \leq y_i$ for each
$1 \leq i \leq n$, we have $f(x) \leq f(y)$.

For an $n$-variable Boolean function $f$, the \emph{influence of the
$i$-th variable on $f$} is defined to be
\[
I_i(f) = \frac{1}{2^n} \sum_{x \in \{0,1\}^n}\left|
f(x+e_i) - f(x) \right|,
\]
where $e_i$ denotes the element of $\{0,1\}^n$ whose only non-zero
coordinate is in the $i$-th position, and addition is done
coordinate-wise modulo two. The \emph{total influence of $f$} is
defined to be
\[
I(f) = \sum_{i=1}^n I_i(f).
\]
For the proof of our result it will be convenient to introduce the
following definition: Given $y \in \{0,1\}^n$ we define the
\emph{influence of $y$ on $f$} to be
\[
I_y(f) = \frac{1}{2^n} \sum_{x \in \{0,1\}^n}\left| f(x+y) - f(x)
\right|.
\]
[We remark that if we consider the correspondence between the
elements of $\{0,1\}^n$ and the subsets of $\{1,\ldots,n\}$ then the
influence of $y$ on $f$ is not the same as the usual definition of
the influence of the set $Y$ (corresponding to $y$) over $f$. Since
we will not be using the latter definition, we hope that no
confusion arises.]

The notion of influence of a variable on a Boolean function was
introduced by Ben-Or and Linial~\cite{BenOr&Linial90}. It has since
found many application in discrete mathematics, theoretical computer
science and social choice theory. We refer the reader
to~\cite{Kalai&Safra06} for a survey of some of these applications.
In this note we study the effect on the influence after applying an
invertible linear transformation on a monotone Boolean function.

Given an $n$-variable Boolean function $f$ and an invertible linear
transformation $L \in \GL_n(\mathbb{F}_2)$, the function $Lf$ is
defined by $Lf(x) = f(Lx)$. In~\cite{Keller&Pilpel09} Keller and
Pilpel raised the following conjecture.

\begin{conjecture}[Keller and Pilpel~\cite{Keller&Pilpel09}]
If $f$ is an $n$-variable monotone Boolean function and $L \in
GL_n(\mathbb{F}_2)$ then $I(f) \leq I(Lf)$.
\end{conjecture}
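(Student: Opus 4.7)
The plan is to reformulate the inequality and then deduce it from a single lemma combined with Hall's marriage theorem. A change of variables $x \mapsto L^{-1}x$ in the definition of $I_y$ shows that $I_y(Lf) = I_{Ly}(f)$ for every $y \in \{0,1\}^n$, so
\[
I(Lf) = \sum_{i=1}^{n} I_{L e_i}(f).
\]
Setting $v_i := L e_i$, the vectors $v_1, \ldots, v_n$ form a basis of $\mathbb{F}_2^n$, so the conjecture is equivalent to the claim that for any basis $\{v_1, \ldots, v_n\}$ of $\mathbb{F}_2^n$ and any monotone Boolean $f$,
\[
\sum_{i=1}^{n} I_{v_i}(f) \;\geq\; \sum_{i=1}^{n} I_{e_i}(f).
\]

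The heart of the argument, and the step I expect to be the main obstacle, is the following lemma: if $f$ is monotone and $v \in \mathbb{F}_2^n$ satisfies $v_i = 1$, then $I_v(f) \geq I_{e_i}(f)$. I would prove it by writing $v = e_i + w$ with $w_i = 0$, splitting each $x \in \{0,1\}^n$ as a pair $(x_i, z)$ with $z \in \{0,1\}^{[n] \setminus \{i\}}$, and defining $g_b(z) \in \{0,1\}$ to be the value of $f$ at the point whose $i$-th coordinate is $b$ and whose other coordinates are $z$; monotonicity of $f$ forces $g_0 \leq g_1$ pointwise. A short calculation using that the uniform distribution on $\{0,1\}^{[n] \setminus \{i\}}$ is invariant under the shift $z \mapsto z + w'$ (where $w'$ denotes $w$ with its $i$-th coordinate omitted) turns $I_v(f)$ into $\Pr[g_0(Z) \neq g_1(Z+w')]$, while $I_{e_i}(f) = \Pr[g_0(Z) = 0,\, g_1(Z) = 1]$. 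Expanding the first probability into its two disjoint cases and applying shift invariance once more reduces the desired inequality to the triviality $\Pr[g_0(Z) = 1,\, g_1(Z+w') = 1] \leq \Pr[g_0(Z) = 1]$.

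Finally, to deduce the reformulated inequality from the lemma, build a bipartite graph with left set $\{e_1, \ldots, e_n\}$ and right set $\{v_1, \ldots, v_n\}$, placing an edge $e_i \sim v_j$ precisely when $(v_j)_i = 1$. For any $T \subseteq [n]$, the basis vectors with no neighbour in $\{e_i : i \in T\}$ are exactly those supported in $[n] \setminus T$, which lie in a subspace of dimension $n - |T|$; linear independence then forces at most $n - |T|$ of them, verifying Hall's condition $|N(T)| \geq |T|$. A resulting perfect matching $\pi : [n] \to [n]$ satisfies $(v_{\pi(i)})_i = 1$ for every $i$, and the lemma together with the fact that $\pi$ is a bijection gives
\[
\sum_{i=1}^n I_{v_i}(f) \;=\; \sum_{i=1}^n I_{v_{\pi(i)}}(f) \;\geq\; \sum_{i=1}^n I_{e_i}(f),
\]
as required.
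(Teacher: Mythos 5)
Your proposal is correct and follows essentially the same route as the paper: Hall's marriage theorem (applied to the same bipartite incidence graph, with the same linear-independence verification of Hall's condition) reduces everything to the single-coordinate lemma that $I_v(f) \geq I_{e_i}(f)$ whenever $v_i = 1$, which is exactly the paper's key inequality $I_{Le_i}(f) \geq I_i(f)$ after its column permutation. Your probabilistic proof of that lemma, pairing $z$ with $z+e_i$ and exploiting $g_0 \leq g_1$, is the same computation the paper carries out directly with absolute values via the triangle inequality and monotonicity.
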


We prove this conjecture in the next section.

\section{Proof of the conjecture}

To prove the conjecture we will use the following simple
combinatorial lemma. We will prove the lemma using the well-known
Hall's marriage theorem. One can obtain short proofs of the lemma
using other equivalent statements.

\begin{lemma}
Let $L \in GL_n(\mathbb{F}_2)$. Then we can permute the columns of
$L$ to obtain a new matrix $L'$ whose diagonal entries are non-zero.
\end{lemma}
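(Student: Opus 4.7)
The plan is to apply Hall's theorem to a bipartite graph encoding the $1$-entries of $L$. Specifically, form the bipartite graph $G$ with parts $R = \{1,\ldots,n\}$ (indexing rows) and $C = \{1,\ldots,n\}$ (indexing columns), and put an edge between $i \in R$ and $j \in C$ whenever $L_{i,j} = 1$. A perfect matching of $G$ corresponds exactly to a permutation $\sigma$ of $\{1,\ldots,n\}$ with $L_{i,\sigma(i)} = 1$ for every $i$; and if we then form $L'$ by placing column $\sigma(i)$ of $L$ into position $i$ (i.e.\ permuting the columns by $\sigma^{-1}$), the diagonal entries of $L'$ are all equal to $1$. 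So it suffices to produce the matching.

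To verify Hall's condition, take any $A \subseteq R$ and write $N(A) \subseteq C$ for its neighbourhood. By the definition of $G$, every entry $L_{i,j}$ with $i \in A$ and $j \notin N(A)$ is zero, so each row of $L$ indexed by $A$ has its support contained in $N(A)$.

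Assume for contradiction that $|N(A)| < |A|$. Then the $|A|$ rows $\{L_{i,\cdot} : i \in A\}$ all lie in the $\mathbb{F}_2$-subspace of $\mathbb{F}_2^n$ consisting of vectors supported on $N(A)$, which has dimension $|N(A)| < |A|$. Hence these rows are linearly dependent over $\mathbb{F}_2$, contradicting the invertibility of $L$. So Hall's condition holds and the required matching, hence the desired column permutation, exists.

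There is no real obstacle: the argument is essentially a one-line application of Hall's theorem once the bipartite graph is set up, and the key observation is merely that any failure of Hall's condition immediately exhibits linearly dependent rows of $L$.
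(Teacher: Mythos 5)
Your proof is correct and follows essentially the same route as the paper: both set up the bipartite graph on rows and columns with edges at the $1$-entries, and verify Hall's condition by observing that a violating set of rows would be supported on fewer than $|A|$ columns and hence linearly dependent, contradicting invertibility. No differences worth noting.
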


\begin{proof}
Let us define a bipartite graph $B$ on $\{r_1,\ldots,r_n\} \times
\{c_1,\ldots c_n\}$ by joining $r_i$ to $c_j$ if and only if
$L_{ij}=1$. It is enough to prove that $B$ contains a perfect
matching. Indeed, if $r_i$ is matched to $c_{\pi(i)}$, then
$\pi^{-1}$ provides the required permutation of the columns. The
existence of this perfect matching is an immediate consequence of
Hall's marriage theorem. Indeed, if this is not the case, then there
is a set $R$ of $k$ rows and a set $C$ of $\ell < k$ columns such
that every row of $R$ has a non-zero entry only in a column of $C$.
But since $\ell < k$, the rows in $R$ are linearly dependent
contradicting the fact that $L$ is invertible.
\end{proof}

It is immediate that if $L'$ is obtained from $L$ be permuting its
columns then $I(Lf) = I(L'f)$. Indeed, if $L' = LP$ where $P$ is the
permutation matrix which maps $e_i$ to $e_{\pi(i)}$, then $I_i(L'f)
= I_{\pi(i)}(Lf)$ and so the total influences are equal. Thus to
prove the conjecture we may assume by the previous lemma that each
diagonal entry of $L$ is non-zero. In this case, we will prove the
stronger assertion that $I_i(f) \leq I_i(Lf)$ for each $1 \leq i
\leq n$. We claim that $I_i(Lf) = I_{Le_i}(f)$. Indeed,
\begin{align*}
I_i(Lf) &= \frac{1}{2^n} \sum_{x}\left| Lf(x+e_i) -
Lf(x) \right| \\
&= \frac{1}{2^n} \sum_{x}\left| f(Lx+Le_i) - f(Lx)
\right| \\
&= \frac{1}{2^n} \sum_{y}\left| f(y+Le_i) - f(y)
\right| \\
&= I_{Le_i}(f).
\end{align*}
Splitting the sum in the definition of $I_{Le_i}(f)$ into two parts
depending on whether the $i$-th coordinate is equal to zero or not
we obtain that
\begin{align*} I_{Le_i}(f) &= \frac{1}{2^n} \sum_{y}\left|
f(y+Le_i)
- f(y) \right| \\
&= \frac{1}{2^n} \left( \sum_{\{y : y_i=0\}} \left|f(y+Le_i) - f(y)
\right| +  \sum_{\{y : y_i=1\}} \left| f(y) - f(y + Le_i) \right|
\right) \\
&= \frac{1}{2^n} \sum_{\{z:z_i=0\}} \left( \left|f(z+Le_i) - f(z)
\right| + \left|f(z+e_i) - f(z+e_i + Le_i) \right| \right)  \\
&\geq \frac{1}{2^n} \sum_{\{z:z_i=0\}} \left| f(z+e_i) + f(z+Le_i) -
f(z) - f(z+e_i + Le_i)\right|.
\end{align*}
Observe that since each diagonal entry of $L$ is non-zero, the
$i$-th coordinate of $Le_i$ is equal to one and so if the $i$-th
coordinate of $z$ is zero, then the $i$-th coordinate of
$z+e_i+Le_i$ is also zero and so by the monotonicity of $f$ we have
$f(z) \leq f(z+e_i)$ and $f(z+e_i+ Le_i) \leq f(z+Le_i)$. It follows
that
\begin{align*}
I_{Le_i}(f) &\geq \frac{1}{2^n} \sum_{\{z:z_i=0\}} \left| f(z+e_i) +
f(z+Le_i) - f(z)
- f(z+e_i + Le_i)\right| \\
&= \frac{1}{2^n} \sum_{\{z:z_i=0\}} \left| f(z+e_i) - f(z) \right| +
\frac{1}{2^n} \sum_{\{z:z_i=0\}} \left|
f(z+Le_i) - f(z+e_i + Le_i)\right|\\
&= \frac{1}{2^n} \sum_{\{z:z_i=0\}} \left| f(z+e_i) - f(z) \right| +
\frac{1}{2^n} \sum_{\{w:w_i=1\}} \left|f(w) - f(w + Le_i)\right| \\
&= I_i(f),
\end{align*}
as required. This completes the proof of Conjecture~1.


\begin{thebibliography}{9}

\bibitem{BenOr&Linial90} Ben-Or and N. Linial, Collective coin
flipping, in {\it Randomness and computation}, Academic Press 1990,
91--115.

\bibitem{Kalai&Safra06} G. Kalai\ and\ S. Safra, jjklds Threshold
phenomena and influence: perspectives from mathematics, computer
science, and economics, in {\it Computational complexity and
statistical physics}, Oxford Univ. Press 2006, 25--60.

\bibitem{Keller&Pilpel09} N. Keller and H. Pilpel, Linear
transformations of monotone functions on the discrete cube, \emph{
Discrete Math.} {\bf 309} (2009), 4210--4214.

\end{thebibliography}
\end{document}